\numberwithin{equation}{section}
\newtheorem{theorem}{Theorem}[section]
\newtheorem{prob}[theorem]{Problem}
\newtheorem{lemma}[theorem]{Lemma}
\theoremstyle{remark}
\newcommand*{\hh}{\mathcal{H}}
\newcommand*{\ascr}{\mathscr{A}}
\newcommand*{\natu}{\mathbb{N}}
\newcommand*{\comp}{\mathbb{C}}
\newcommand*{\borel}{\mathfrak{B}}
\newcommand*{\cbb}{\comp}
\newcommand*{\D}{\mathrm{d\hspace{.1ex}}}
\theoremstyle{definition}
\newtheorem{ex}[theorem]{Example}
\newcommand*{\Le}{\leqslant}
\newcommand*{\Ge}{\geqslant}
\newcommand*{\ogr}{\boldsymbol{B}}
\newcommand*{\kk}{\mathcal{K}}
\newcommand*{\rbb}{\mathbb{R}}
\newcommand*{\zbb}{\mathbb{Z}}
\begin{document}

   \title[Subnormal $n$th roots of quasinormal operators are quasinormal]
{Subnormal $n$th roots of quasinormal
operators \\ are quasinormal}

   \author[P. Pietrzycki and J. Stochel]{Pawe{\l} Pietrzycki and  Jan Stochel}

   \subjclass[2010]{Primary 47B20, 47B15;
Secondary 47A63, 44A60}

   \keywords{quasinormal operator,
subnormal operator, operator monotone
function, Stieltjes moment problem}

   \address{Wydzia{\l} Matematyki i Informatyki, Uniwersytet
Jagiello\'{n}ski, ul. {\L}ojasiewicza 6, PL-30348
Krak\'{o}w}

   \email{pawel.pietrzycki@im.uj.edu.pl}

   \address{Wydzia{\l} Matematyki i Informatyki, Uniwersytet
Jagiello\'{n}ski, ul. {\L}ojasiewicza 6, PL-30348
Krak\'{o}w}

   \email{jan.stochel@im.uj.edu.pl}

   \begin{abstract}
In a recent paper \cite{Curto20}, R. E.
Curto, S. H. Lee and J. Yoon asked the
following question: \textit{Let $A$ be a
subnormal operator, and assume that $A^2$
is quasinormal. Does it follow that $A$
is quasinormal?} In this paper, we answer
that question in the affirmative. In
fact, we prove a more general result that
subnormal $n$th roots of quasinormal
operators are quasinormal.
   \end{abstract}

   \maketitle

   \section{Introduction}
An operator\footnote{In this paper, by
``an operator'' we mean ``a bounded
linear operator''.} $A$ on a (complex)
Hilbert space $\hh$ is said to be {\em
subnormal} if it is (unitarily equivalent
to) the restriction of a normal operator
to its invariant (closed vector)
subspace. In turn, $A$ is called
\textit{quasinormal} if
$A(A^*A)=(A^*A)A$, or equivalently, if
and only if $U|A| = |A|U$, where $A =
U|A|$ is the polar decomposition of $A$
(see \cite[Theorem~7.20]{Weid80}). The
classes of subnormal and quasinormal
operators were introduced by P. Halmos in
\cite{hal50} and by A. Brown in
\cite{brow53}, respectively. It is
well-known that quasinormal operators are
subnormal but not conversely (see
\cite[Problem~195]{hal82}). For more
information on subnormal and quasinormal
operators we refer the reader to
\cite{hal82,con91}.

In a recent paper \cite{Curto20}, R. E.
Curto, S. H. Lee and J. Yoon, partially
motivated by the results of their
previous articles \cite{Curto05,Curto07},
asked the following question
   \begin{prob}[{\cite[Problem~1.1]{Curto20}}] \label{prob}
Let $A$ be a subnormal operator, and
assume that $A^2$ is quasinormal. Does it
follow that $A$ is quasinormal\/{\em ?}
   \end{prob}
They proved that a left invertible
subnormal operator $A$ whose square
$A^2$ is quasinormal, must be
quasinormal (see
\cite[Theorem~2.4]{Curto20}). It
remains an open question as to whether
this is true in general, without
assuming left invertibility. In this
paper, we show that Problem~\ref{prob}
has an affirmative answer. In fact, we
prove the following more general result
(see also Theorem~\ref{maintw2} for an
even more general statement).
   \begin{theorem}\label{maintw}
Let $A$ be a subnormal operator on a
Hilbert space $\hh$ and $n$ be an
integer greater than $1$. Assume that
$A^n$ is quasinormal. Then $A$ is
quasinormal.
   \end{theorem}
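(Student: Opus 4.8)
The plan is to reduce the assertion to a purely algebraic identity relating the powers $A^{*j}A^j$ and $(A^*A)^j$, and to obtain that identity from subnormality by a moment argument on the half-line.

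First I would record a scalar moment representation coming from subnormality. Fix a normal operator $N$ on some Hilbert space $\kk \supseteq \hh$ that extends $A$, and let $E$ be its spectral measure. Since $A^kx = N^kx$ for $x \in \hh$, we have $\langle A^{*k}A^kx,x\rangle = \|N^kx\|^2 = \int_{\comp}|\lambda|^{2k}\,\D\|E(\lambda)x\|^2$ for every integer $k \Ge 0$. Pushing the finite positive measure $\|E(\cdot)x\|^2$ forward along $\comp \ni \lambda \mapsto |\lambda|^2 \in [0,\infty)$ yields, for each $x$, a positive Borel measure $\mu_x$ with compact support in $[0,\infty)$ such that $\langle A^{*k}A^kx,x\rangle = \int t^k\,\D\mu_x(t)$ for all $k \Ge 0$.

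Next I would bring in the hypothesis that $A^n$ is quasinormal. A quasinormal operator $Q$ satisfies $Q^{*k}Q^k = (Q^*Q)^k$ for every $k$ (since $Q^*Q$ commutes with $Q$ and with $Q^*$), so with $Q = A^n$ we get $\langle A^{*nk}A^{nk}x,x\rangle = \langle (A^{*n}A^n)^kx,x\rangle$ for all $k \Ge 0$. Thus the $k$th moment of the (compactly supported) scalar spectral measure $\rho_x$ of the positive operator $A^{*n}A^n$ at $x$ equals $\int t^{nk}\,\D\mu_x(t)$, which is the $k$th moment of the push-forward of $\mu_x$ along $t \mapsto t^n$. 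By determinacy of the Stieltjes moment problem for compactly supported measures, those two measures agree; and since $t \mapsto t^n$ is a homeomorphism of $[0,\infty)$, this pins down $\mu_x$ itself as the push-forward of $\rho_x$ along $s \mapsto s^{1/n}$. Integrating $t^j$ against $\mu_x$ then gives $\langle A^{*j}A^jx,x\rangle = \int s^{j/n}\,\D\rho_x(s) = \langle (A^{*n}A^n)^{j/n}x,x\rangle$ for every $j \Ge 0$ and every $x \in \hh$, whence $A^{*j}A^j = (A^{*n}A^n)^{j/n}$; in particular $A^*A = (A^{*n}A^n)^{1/n}$ — this is where the good behaviour of the $n$th root, i.e.\ operator monotonicity of $t \mapsto t^{1/n}$ on $[0,\infty)$, is being exploited — and feeding this back produces the identities $A^{*j}A^j = (A^*A)^j$ for all $j \Ge 1$.

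Finally I would close with a short computation showing that these identities — in fact just the cases $j = 2, 3$ — force quasinormality. Writing $B = A^*A$, the case $j = 2$ gives both $A^*BA = B^2$ and $B^2 = A^{*2}A^2$, so $A^*B^2A = A^{*3}A^3 = B^3$ by the case $j = 3$; expanding $(AB - BA)^*(AB - BA) = B(A^*A)B - B(A^*BA) - (A^*BA)B + A^*B^2A = B^3 - B^3 - B^3 + B^3 = 0$ shows $AB = BA$, i.e.\ $A(A^*A) = (A^*A)A$, which is exactly quasinormality of $A$. I expect the main obstacle to be the middle step: making the comparison of the two moment sequences rigorous, carefully justifying determinacy and the transport by the homeomorphism $t \mapsto t^n$, and then passing safely from the scalar equalities (valid for each individual $x$) back to the operator identity $A^{*j}A^j = (A^*A)^j$. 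Everything before and after that is essentially bookkeeping and a one-line calculation.
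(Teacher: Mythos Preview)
Your argument is correct and is essentially a scalar version of the paper's second proof: the paper packages your family $\{\mu_x\}$ into a single semispectral measure $F$ on $\rbb_+$, compares $F\circ\psi_n^{-1}$ with the spectral measure of $A^{*n}A^n$ via Stieltjes determinacy (exactly your step), and then concludes that $F$ itself is spectral and invokes Embry's characterization. Two minor remarks: your parenthetical about operator monotonicity of $t\mapsto t^{1/n}$ is a red herring---the identity $A^*A=(A^{*n}A^n)^{1/n}$ drops out of the moment identification and the functional calculus alone, with no appeal to L\"owner--Heinz; it is the paper's \emph{first} proof (via Hansen's inequality) that genuinely uses operator monotonicity. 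Your closing computation that the cases $j=2,3$ of $A^{*j}A^j=(A^*A)^j$ already force $A(A^*A)=(A^*A)A$ is a pleasant self-contained substitute for the paper's citation of Embry.
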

In Section~\ref{Sec.3} we give two
proofs of Theorem~\ref{maintw}. In both
cases we use Embry's characterization
of quasinormal operators which, for the
reader's convenience, is stated
explicitly below. In fact, this
characterization takes two equivalent
forms described by the conditions (ii)
and (iii) of Theorem~\ref{embry}. In
the first proof of Theorem~\ref{maintw}
we exploit the condition (ii), while in
the second the condition (iii). This is
because we use two completely different
techniques for proving
Theorem~\ref{maintw}. The first one
appeals to the theory of operator
monotone functions, in particular to
Hansen's inequality. The other relies
on the theory of (scalar and operator)
moment problems; its origin goes back
to the celebrated Embry's
characterization of subnormal operators
expressed in terms of the Stieltjes
operator moment problem \cite{Embry73},
later on developed by Lambert
\cite{Lam76} and Agler
\cite[Theorem~3.1]{Ag85}. In the next
section, we give basic information on
these techniques.

Similar questions to that in
Problem~\ref{prob} concerning square
roots (or more generally $n$th roots)
in selected classes of operators have
been studied since at least the early
$50$'s (see e.g.,
\cite{hal53,hal54,put57,wog85,con87}).
In particular, it is known that the
hyponormal $n$th roots of normal
operators are normal (see
\cite[Theorem~5]{sta62}). However, if
$A$ is a hyponormal operator and $A^n$
is subnormal, then $A$ doesn't have to
be subnormal (see \cite[pp.\
378/379]{sta66}). It is also worth
mentioning that there are subnormal (or
even isometric) operators which have no
square roots (see
\cite[Problem~145]{hal82}; see also
\cite{hal53}).

We now state Embry's characterization
of quasinormal operators (the
``moreover'' part of
Theorem~\ref{embry} follows from the
observation that due to \eqref{kul-3},
$E$ is the spectral measure of $A^*A$).
   \begin{theorem}[Embry's
characterization {\cite[page
63]{Embry73}}] \label{embry} Let $A$ be
an operator on $\mathcal{H}$. Then the
following conditions are equivalent{\em
:}
   \begin{itemize}
   \item[(i)] $A$ is quasinormal,
   \end{itemize}
   \begin{itemize}
   \item[(ii)] $A^{*k}A^{k}=(A^*A)^k$ for $k =0,1,2,\ldots$,
   \item[(iii)] there exists a Borel spectral measure $E$ on
$\rbb_{+}$ such that
   \begin{equation} \label{kul-3}
A^{*k}A^k = \int_{\rbb_{+}} x^k E(\D
x), \quad k =0,1,2,\ldots.
   \end{equation}
   \end{itemize}
Moreover, the spectral measure $E$ in
{\em (iii)} is unique and $E((\|A\|^2,
\infty))=0$.
   \end{theorem}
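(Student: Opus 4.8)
The plan is to prove the cycle (i)$\Rightarrow$(ii)$\Rightarrow$(iii)$\Rightarrow$(ii)$\Rightarrow$(i), the only substantial step being the passage from the moment identities back to quasinormality. Throughout I would write $T=A^*A\Ge 0$, so $\|T\|=\|A\|^2$, and use the definition directly: $A$ is quasinormal exactly when $A$ commutes with $T$.

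For (i)$\Rightarrow$(ii) I would induct on $k$: the cases $k=0,1$ are trivial, and if $A^{*k}A^k=T^k$ then
$A^{*(k+1)}A^{k+1}=A^*(A^{*k}A^k)A=A^*T^kA=A^*AT^k=T^{k+1}$,
where the middle equality uses $AT^k=T^kA$, a consequence of $AT=TA$. For (ii)$\Rightarrow$(iii) I would take $E$ to be the spectral measure of the positive operator $T$; then $A^{*k}A^k=T^k=\int_{\rbb_{+}}x^k\,E(\D x)$, and since $\supp E=\sigma(T)\subseteq[0,\|A\|^2]$ this is a Borel spectral measure on $\rbb_{+}$ with $E((\|A\|^2,\infty))=0$. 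For (iii)$\Rightarrow$(ii), multiplicativity of the spectral integral gives $\int_{\rbb_{+}}x^k\,E(\D x)=\bigl(\int_{\rbb_{+}}x\,E(\D x)\bigr)^k$; taking $k=1$ in \eqref{kul-3} identifies $\int_{\rbb_{+}}x\,E(\D x)=A^*A=T$, hence $A^{*k}A^k=T^k$ for all $k$, which is (ii).

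The heart of the argument, and the step I expect to be the main obstacle, is (ii)$\Rightarrow$(i): extracting $TA=AT$ from the ``scalar-looking'' identities $A^{*k}A^k=T^k$; in fact only $k=2,3$ are needed. From $k=2$ one reads off $A^*TA=A^*(A^*A)A=A^{*2}A^2=T^2$, and substituting this into $A^{*3}A^3=A^*(A^{*2}A^2)A$ and invoking $k=3$ gives $A^*T^2A=T^3$. Then, for $h\in\hh$,
\[
\|(TA-AT)h\|^2=\|TAh\|^2+\|ATh\|^2-2\re\langle TAh,ATh\rangle,
\]
and, using $T^*=T$ and $A^*A=T$, each of the three terms on the right collapses to $\langle T^3h,h\rangle$: indeed $\|TAh\|^2=\langle A^*T^2Ah,h\rangle=\langle T^3h,h\rangle$; $\|ATh\|^2=\langle A^*ATh,Th\rangle=\langle T^3h,h\rangle$; and $\langle TAh,ATh\rangle=\langle (A^*TA)Th,h\rangle=\langle T^3h,h\rangle$, which is real. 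Hence $\|(TA-AT)h\|^2=0$ for every $h$, so $TA=AT$ and $A$ is quasinormal. The care needed here is purely bookkeeping: making sure each rewriting of a word such as $A^{*2}A^2$ as $A^*(A^*A)A$ is carried out in the correct order and that the $k=2$ identity is promoted into the $k=3$ one before expanding the commutator.

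Finally, for the ``moreover'' clause: any $E$ as in (iii) is a spectral measure with $\int_{\rbb_{+}}x\,E(\D x)=A^*A$, so by the uniqueness part of the spectral theorem for the self-adjoint operator $A^*A$ it must coincide with the spectral measure of $A^*A$; uniqueness, and the support bound $E((\|A\|^2,\infty))=0$ from $\sigma(A^*A)\subseteq[0,\|A\|^2]$, then follow at once.
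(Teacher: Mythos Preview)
The paper does not actually prove Theorem~\ref{embry}; it quotes the result from \cite[page~63]{Embry73} and adds only the parenthetical remark that the ``moreover'' clause holds because \eqref{kul-3} forces $E$ to be the spectral measure of $A^*A$. Your proposal is therefore not competing with a proof in the paper but supplying one, and it is correct. Your treatment of the ``moreover'' clause is exactly the paper's observation.

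A couple of remarks on your write-up. Your (ii)$\Rightarrow$(i), carried out using only $k=2,3$, is in fact the sharpening the paper alludes to in its discussion of Problem~\ref{prob3} (the case $S=\{2,3\}$, i.e.\ $m=p=1$, $n=2$ of \cite[Theorem~2.1]{uch93}); the direct commutator expansion $\|(TA-AT)h\|^2=0$ is a clean way to see it. One cosmetic slip: in the cross term you write $\langle TAh,ATh\rangle=\langle (A^*TA)Th,h\rangle$, but moving both factors to the first slot gives $(AT)^*(TA)=T(A^*TA)$, so the honest intermediate expression is $\langle T(A^*TA)h,h\rangle$. Since $A^*TA=T^2$ commutes with $T$, both equal $\langle T^3h,h\rangle$ and the conclusion is unaffected.
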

(We refer the reader to
\cite[Theorem~3.6]{jabl14} for the
version of the above theorem for
operators that are not necessarily
bounded; cf.\ also \cite{uch93}). The
condition (ii) of Theorem~\ref{embry}
leads to the following question.
   \begin{prob}  \label{prob3}
For what subsets $S$ of
$\{1,2,3,\ldots\}$ does the system of
equations
   \begin{equation}\label{xukl}
A^{*k}A^k=(A^*A)^k, \quad k\in S,
   \end{equation}
imply the quasinormality of $A$?
   \end{prob}
This problem, to some extent related to
the theory of operator monotone and
operator convex functions, has been
studied by several authors (see
\cite{uch93,uch01,Jib10,jabl14,pp16,pp18}).
In particular, $A$ is quasinormal if
any of the following conditions~holds:
   \begin{itemize}
   \item  $A$ is compact (hyponormal,
a unilateral or bilateral weighted shift)
and satisfies \eqref{xukl} with
$S=\{n\}$, where $n$ is a fixed integer
greater than $1$ (see \cite[p.\
198]{uch93}, \cite[Theorem~5.3]{pp18} and
\cite[Theorem~3.3]{pp16}),
   \item $A$ is log-hyponormal and satisfies
\eqref{xukl} with $S=\{n\}$, where $n$ is
a fixed integer greater than $2$ (see
\cite[Theorem~4.1]{uch01}),
   \item $A$ satisfies \eqref{xukl}
with $S=\{m,n,p,m+p,n+p\}$, where $m,n,p$
are fixed positive integers such that
$m<n$ (see \cite[Theorem~3.11]{pp18}; see
also \cite[Theorem~2.1]{uch93} for the
case $p=1$; in particular, taking $m=p=1$
and $n=2$ covers the case $S=\{2,3\}$
considered much later in
\cite[Proposition~13]{Jib10}).
   \end{itemize}
On the other hand, for every integer
$n\Ge 2$, there exists an operator $A$
such that
  \begin{equation}\label{qqq}
\text{$A^{*n}A^{n}=(A^{*} A)^n$ and
$A^{*k}A^{k} \neq (A^{*} A)^k$ for all
$k\in\{2,3,4,\ldots\}\setminus \{n\}$.}
  \end{equation}
Examples of such operators are to be
found in the classes of weighted shifts
on directed trees and composition
operators on $L^2$-spaces (see
\cite[Example~5.5]{jabl14} and
\cite[Theorem~4.3]{pp16}).

It turns out that the operators
satisfying \eqref{xukl} with
$S=\{\kappa\}$, where $\kappa$ is a
fixed integer greater than $1$, can
successfully replace quasinormal
operators in the predecessor of the
implication in Problem~\ref{prob} (see
Theorem~\ref{maintw2}). The adaptation
of both techniques used in the proof of
Theorem~\ref{maintw} to the proof of
Theorem~\ref{maintw2} resulted in
finding a new criterion for a
semi-spectral measure to be spectral,
expressed in terms of its two
``moments'' (see
Theorem~\ref{maintw4}).
   \section{Preliminaries}
In this paper, we use the following
notation. The fields of real and
complex numbers are denoted by $\rbb$
and $\mathbb{C}$, respectively. The
symbols $\zbb_{+}$, $\mathbb{N}$ and
$\rbb_+$ stand for the sets of
nonnegative integers, positive integers
and nonnegative real numbers,
respectively. We write $\borel(X)$ for
the $\sigma$-algebra of all Borel
subsets of a topological Hausdorff
space $X$.

A sequence
$\{\gamma_n\}_{n=0}^{\infty}$ of real
numbers is said to be a {\em Stieltjes
moment sequence} if there exists a
positive Borel measure $\mu$ on
$\rbb_+$ such that
   \begin{align} \label{hamb}
\gamma_n = \int_{\rbb_+} t^n d \mu(t),
\quad n\in \zbb_{+}.
   \end{align}
A positive Borel measure $\mu$ on
$\rbb_+$ satisfying \eqref{hamb} is
called a {\em representing measure} of
$\{\gamma_n\}_{n=0}^{\infty}$. If
$\{\gamma_n\}_{n=0}^{\infty}$ is a
Stieltjes moment sequence which has a
unique representing measure, then we
say that $\{\gamma_n\}_{n=0}^{\infty}$
is {\em determinate}. It is well known
that if a Stieltjes moment sequence has
a representing measure with compact
support, then it is determinate. The
reader is referred to \cite{B-C-R} for
comprehensive information regarding the
Stieltjes moment problem.

Let $\hh$ be a Hilbert space. Denote by
$\ogr(\hh)$ the $C^*$-algebra of all
bounded linear operators on $\hh$, and by
$I_{\hh}$ the identity operator on $\hh$.
As usual, $A^*$ stands for the adjoint of
$A\in \ogr(\hh)$. We say that an operator
$A\in \ogr(\hh)$ is
   \begin{itemize}
   \item \textit{positive} if   $\langle Ah,h\rangle \Ge 0$
for all $h\in \hh$,
   \item an \textit{orthogonal projection} if $A=A^*$ and $A=A^2$,
   \item \textit{selfadjoint} if $A=A^*$,
   \item \textit{normal} if $A^*A=AA^*$,
   \item \textit{quasinormal} if $A(A^*A)=(A^*A)A$,
   \item \textit{subnormal} if it is (unitarily equivalent to) the restriction
of a normal operator to its invariant subspace.
   \end{itemize}

Let $\ascr$ be a $\sigma$-algebra of
subsets of a set $X$ and let $F\colon
\ascr \to \ogr(\hh)$ be a semispectral
measure, that is $\langle F (\cdot)f,
f\rangle$ is a positive measure for every
$f \in \hh$, and $F (X) = I_\hh$. Denote
by $L^1(F)$ the vector space of all
$\ascr$-measurable functions $f\colon X
\to \cbb$ such that $\int_{X} |f(x)|
\langle F(\D x)h, h\rangle < \infty$ for
all $h\in \hh$. Then for every $f\in
L^1(F)$, there exists a unique operator
$\int_X f \D F \in \ogr(\hh)$ such that
(see e.g., \cite[Appendix]{Sto92})
   \begin{align} \label{form-ua}
\Big\langle\int_X f \D F h, h\Big\rangle
= \int_X f(x) \langle F(\D x)h, h\rangle,
\quad h\in\hh.
   \end{align}
If $F$ is a spectral measure, that is
$F(\varDelta)$ is an orthogonal
projection for every $\varDelta \in
\ascr$, then $\int_X f \D F$ coincides
with the usual spectral integral. In
particular, if $F$ is the spectral
measure of a normal operator $A$, then we
write $f(A)=\int_{\mathbb{C}} f \D F$ for
any $F$-essentially bounded Borel
function $f\colon \cbb \to \cbb$; the map
$f \mapsto f(A)$ is called the Stone-von
Neumann functional calculus. We refer the
reader to \cite{Rud73,Weid80,Sch12} for
the necessary information on spectral
integrals, including the spectral theorem
for normal operators and the Stone-von
Neumann functional calculus, which we
will need in this paper.

The following fact can be deuced from the
spectral theorem
\cite[Theorem~12.23]{Rud73} by applying
the Stone-von Neumann functional calculus
(cf.\ also \cite[Theorem~7.20]{Weid80}).
   \begin{theorem}\label{rsn} If $p$ is a positive number,
then the commutants of a positive
operator and its $p$th power coincide.
   \end{theorem}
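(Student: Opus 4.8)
The plan is to derive the statement entirely from the continuous (Stone--von Neumann) functional calculus. Fix a positive operator $A\in\ogr(\hh)$ and a number $p>0$; the goal is to show that the commutants $\{A\}'$ and $\{A^p\}'$ coincide. Since $A$ is positive, its spectrum is a compact subset of $\rbb_+$, so both $x\mapsto x^p$ and $x\mapsto x^{1/p}$ are continuous on $\rbb_+$; moreover $A^p$ — whether read as the spectral integral $\int_{\rbb_+}x^p\,E(\D x)$ or as the operator obtained by applying $x\mapsto x^p$ to $A$ through the continuous functional calculus — is the same operator, and it is again positive with $\sigma(A^p)\subseteq[0,\|A\|^p]$.

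The core of the argument is the single inclusion $\{T\}'\subseteq\{T^q\}'$, valid for an arbitrary positive operator $T$ and an arbitrary $q>0$. Indeed, if $B$ commutes with $T$, then $B$ commutes with $T^k$ for every $k\in\zbb_+$, hence with $w(T)$ for every polynomial $w$; choosing polynomials $w_j$ converging uniformly to $x\mapsto x^q$ on $\sigma(T)$ (Weierstrass) and using that $\|w_j(T)-T^q\|=\sup_{x\in\sigma(T)}|w_j(x)-x^q|\to 0$ by the isometry property of the continuous functional calculus, one lets $j\to\infty$ in $Bw_j(T)=w_j(T)B$ to get $BT^q=T^qB$. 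Now apply this with $T=A$ and $q=p$ to obtain $\{A\}'\subseteq\{A^p\}'$, and with $T=A^p$ and $q=1/p$ to obtain $\{A^p\}'\subseteq\{(A^p)^{1/p}\}'$. Since $g(y)=y^{1/p}$ satisfies $g(x^p)=x$ for $x\in\rbb_+$, the composition rule of the functional calculus identifies $(A^p)^{1/p}=g(f(A))=(g\circ f)(A)$ with $A$, where $f(x)=x^p$; hence $\{A^p\}'\subseteq\{A\}'$. Combining the two inclusions gives $\{A\}'=\{A^p\}'$.

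No genuine obstacle is expected here: the proof is routine functional-calculus bookkeeping. The two points worth stating carefully are that $BA=AB$ with $A=A^*$ already forces $B$ to commute with the whole $C^{*}$-algebra generated by $A$ (so the polynomial-approximation step is legitimate) and that one-sided commutation is preserved under operator-norm limits; together with the consistency of the various descriptions of $A^p$ and the composition identity $g(f(A))=(g\circ f)(A)$. All of these are standard consequences of the spectral theorem and the Stone--von Neumann functional calculus and are available from the references already cited in the paper. Positivity of $A$ is used exactly once — to place $\sigma(A)$ inside $\rbb_+$, where $x\mapsto x^p$ and $x\mapsto x^{1/p}$ are continuous.
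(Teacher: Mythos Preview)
Your proof is correct and follows essentially the same route the paper indicates: the paper does not spell out an argument but simply remarks that the result ``can be deduced from the spectral theorem by applying the Stone--von Neumann functional calculus,'' and your proposal is precisely a careful unpacking of that remark via polynomial approximation of $x\mapsto x^{p}$ and $x\mapsto x^{1/p}$ on the spectrum together with the composition identity $(A^{p})^{1/p}=A$.
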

Let $J\subset \rbb$ be an interval
(which may be open, half-open, or
closed; finite or infinite). A
continuous function $f \colon J
\rightarrow \rbb$ is said to be
\textit{operator monotone} if $f(A)\Le
f(B)$ for any two selfadjoint operators
$A,B\in\ogr(\hh)$ such that $A\Le B$
and the spectra of $A$ and $B$ are
contained in $J$. In 1934 K. L\"owner
\cite{Lo34} proved that a continuous
function defined on an open interval is
operator monotone if and only if it has
an analytic continuation to the complex
upper half-plane which is a Pick
function (cf.\ \cite{Dono74,Han13}).
Operator monotone functions constitute
an important class of real-valued
functions that has a variety of
applications in other branches of
mathematics. What is more, operator
monotone functions have integral
representations with respect to
suitable positive Borel measures. In
particular, a continuous function
\mbox{$f \colon (0, \infty) \rightarrow
\rbb$} is operator monotone if and only
if there exists a positive Borel
measure $\nu$ on $[0,\infty)$ such that
$\int_0^\infty \frac{1}{1+\lambda^2}\D
\nu(\lambda)<\infty$ and
   \begin{equation} \label{repbol}
f(t)=\alpha +\beta t +\int_0^\infty
\Big(\frac{\lambda}{1+\lambda^2} -
\frac{1}{t+\lambda}\Big) \D \nu(\lambda),
\quad t \in (0,\infty),
   \end{equation}
where $\alpha\in \rbb$ and $\beta \in
\rbb_+$ (see \cite[Theorem~5.2]{Han13}
or \cite[p.\ 144]{Bha97}). Below, we
give an important example of a function
which is operator monotone.
   \begin{ex}\label{ex}
For $p\in (0,1)$, the function $f\colon
[0,\infty)\ni t\rightarrow t^p\in\rbb$
is operator monotone and has the
following integral representation (see
\cite[Exercise~V.1.10(iii)]{Bha97} or
\cite[Exercise~V.4.20]{Bha97})
   \begin{equation*}
t^p=\frac{\sin p\pi}{\pi} \int_0^\infty
\frac{t\lambda^{p-1}}{t+\lambda} \D
\lambda, \quad t \in [0,\infty).
   \end{equation*}
   \end{ex}
The fact that the function in Example
\ref{ex} is operator monotone is known as
the L\"owner-Heinz inequality.
   \begin{theorem}[L\"owner-Heinz inequality
\cite{He51,Lo34}] \label{lohe} If $A,B\in
\ogr(\hh)$ are positive operators such
that $B \Le A$ and $p\in [0,1]$, then
$B^p \Le A^p$.
   \end{theorem}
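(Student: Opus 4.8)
The plan is to derive the inequality directly from the integral representation of $t^p$ recorded in Example~\ref{ex}, lifted to operators. The cases $p=0$ and $p=1$ are trivial, so assume $p\in(0,1)$. For a positive operator $C\in\ogr(\hh)$ and $\lambda>0$ the operator $C+\lambda I_{\hh}$ is positive and invertible and $t\mapsto t(t+\lambda)^{-1}$ is bounded on $\rbb_+$; applying the Stone--von Neumann functional calculus to the spectral measure of $C$ and invoking Fubini's theorem yields
\[
C^p=\frac{\sin p\pi}{\pi}\int_0^\infty \lambda^{p-1}\,C(C+\lambda I_{\hh})^{-1}\,\D\lambda ,
\]
the integral being understood in the quadratic-form sense of \eqref{form-ua}. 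I would apply this to $C=A$ and to $C=B$.

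The key step is the pointwise (in $\lambda$) estimate: for every $\lambda>0$,
\[
B(B+\lambda I_{\hh})^{-1}\Le A(A+\lambda I_{\hh})^{-1}.
\]
Writing $C(C+\lambda I_{\hh})^{-1}=I_{\hh}-\lambda(C+\lambda I_{\hh})^{-1}$, this is equivalent to $(A+\lambda I_{\hh})^{-1}\Le (B+\lambda I_{\hh})^{-1}$, that is, to the antitonicity of inversion: $0<S\Le T$ implies $T^{-1}\Le S^{-1}$. To obtain the latter from $0<B+\lambda I_{\hh}\Le A+\lambda I_{\hh}$, conjugate by $(B+\lambda I_{\hh})^{-1/2}$ to get $I_{\hh}\Le D$, where $D=(B+\lambda I_{\hh})^{-1/2}(A+\lambda I_{\hh})(B+\lambda I_{\hh})^{-1/2}$; since $D\Ge I_{\hh}$, the spectral theorem gives $D^{-1}\Le I_{\hh}$; and conjugating this by $(B+\lambda I_{\hh})^{-1/2}$ once more returns $(A+\lambda I_{\hh})^{-1}\Le (B+\lambda I_{\hh})^{-1}$.

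Finally I would integrate. Since $p\in[0,1]$ forces $\sin p\pi\Ge 0$ and $\lambda^{p-1}>0$ on $(0,\infty)$, integrating the scalar inequality $\langle B(B+\lambda I_{\hh})^{-1}h,h\rangle\Le\langle A(A+\lambda I_{\hh})^{-1}h,h\rangle$ against $\lambda^{p-1}\,\D\lambda$ and using the representation above gives $\langle B^p h,h\rangle\Le\langle A^p h,h\rangle$ for all $h\in\hh$, hence $B^p\Le A^p$. The main point requiring care is the passage from Example~\ref{ex} to its operator version, i.e.\ the interchange of the spectral integral defining $C^p$ with the Lebesgue integral in $\lambda$; this is a routine Fubini argument once one observes that $0\le t(t+\lambda)^{-1}\le\min\{1,\|C\|/\lambda\}$ for $t\in[0,\|C\|]$, so that $\lambda\mapsto\lambda^{p-1}\langle C(C+\lambda I_{\hh})^{-1}h,h\rangle$ is integrable over $(0,\infty)$ uniformly for $h$ in the unit ball of $\hh$. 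Everything else in the argument is elementary.
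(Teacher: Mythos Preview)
Your argument is correct. The integral-representation route you take is standard: lift the formula in Example~\ref{ex} to operators via the functional calculus, use the elementary antitonicity of inversion to get the pointwise comparison $B(B+\lambda I_{\hh})^{-1}\Le A(A+\lambda I_{\hh})^{-1}$, and then integrate. Your justification of the Fubini step and of the inversion step are both fine.

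As for comparison with the paper: there is nothing to compare, because the paper does not prove Theorem~\ref{lohe}. It records the theorem as a classical fact with references to L\"owner and Heinz, and merely observes (via Example~\ref{ex} and the general representation~\eqref{repbol}) that $t\mapsto t^p$ is operator monotone. Your proof is exactly the unpacking of that observation into a self-contained argument, so it is entirely in the spirit of what the paper sketches but does not spell out.
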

Another inequality related to operator
monotone functions that is needed in
this paper is the Hansen inequality
\cite{Han80}. In
\cite[Lemma~2.2]{uch93}, M. Uchiyama
gave a necessary and sufficient
condition for equality to hold in the
Hansen inequality when the external
factor is a nontrivial orthogonal
projection (see the ``moreover'' part
of Theorem~\ref{hans} below). The key
ingredient of the proof of
\cite[Lemma~2.2]{uch93} is the integral
representation \eqref{repbol} of
operator monotone functions (to be more
precise, a version of \eqref{repbol}
for the interval $J=\rbb_+$ as in
\cite[pp.\ 144-145]{Bha97}). In the
original formulation of this lemma,
Uchiyama assumed that the underlying
Hilbert space $\hh$ is separable. This
assumption can be dropped due the fact
that for each vector $h\in \hh$ the
smallest closed vector subspace of
$\hh$ reducing both $A$ and $T$ and
containing $h$ is separable.
   \begin{theorem}[\cite{Han80,uch93}]
\label{hans} Let $A\in \ogr(\hh)$ be a
positive operator, $T \in \ogr(\hh)$ be
a contraction and $f\colon
[0,\infty)\rightarrow \rbb$ be a
continuous operator monotone function
such that $f(0)\Ge 0$. Then
   \begin{equation} \label{Han-inq}
T^*f(A)T \Le f(T^*AT).
   \end{equation}
Moreover, if $f$ is not an affine
function and $T$ is an orthogonal
projection such that $T \neq I_{\hh}$,
then equality holds in \eqref{Han-inq}
if and only if $TA=AT$ and $f(0)=0$.
   \end{theorem}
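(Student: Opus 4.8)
The plan is to deduce both assertions from the integral representation of operator monotone functions, adapted to the closed half-line. Since $f\colon[0,\infty)\to\rbb$ is continuous, operator monotone and $f(0)\Ge 0$, a routine reworking of \eqref{repbol} (let $t\to 0^{+}$ and absorb the resulting constant; cf.\ \cite[pp.~144--145]{Bha97}) provides a constant $\beta\Ge 0$ and a positive Borel measure $\mu$ on $(0,\infty)$ with $\int_{(0,\infty)}(1+\lambda)^{-1}\,\D\mu(\lambda)<\infty$ such that
\[
f(t)=f(0)+\beta t+\int_{(0,\infty)}\frac{t}{t+\lambda}\,\D\mu(\lambda),\qquad t\in[0,\infty),
\]
with $\mu=0$ precisely when $f$ is affine. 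Evaluating this representation at $A$ (via the Stone--von Neumann functional calculus), forming $T^{*}(\cdot)T$, and subtracting from the representation evaluated at the positive operator $T^{*}AT$ --- the $\beta$-terms cancel --- we obtain
\[
f(T^{*}AT)-T^{*}f(A)T=f(0)\,(I_{\hh}-T^{*}T)+\int_{(0,\infty)}\Big(T^{*}AT\,(T^{*}AT+\lambda I_{\hh})^{-1}-T^{*}A\,(A+\lambda I_{\hh})^{-1}T\Big)\,\D\mu(\lambda).
\]
The first summand is positive since $T$ is a contraction and $f(0)\Ge 0$. Using the identity $s(s+\lambda)^{-1}=1-\lambda(s+\lambda)^{-1}$ applied to scalars and to the positive operators $A$ and $T^{*}AT$, each integrand is positive if and only if
\[
T^{*}(A+\lambda I_{\hh})^{-1}T+\lambda^{-1}(I_{\hh}-T^{*}T)\ \Ge\ (T^{*}AT+\lambda I_{\hh})^{-1},\qquad \lambda>0,
\]
so it suffices to establish this resolvent inequality.

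Fix $\lambda>0$, set $D=(I_{\hh}-T^{*}T)^{1/2}$, and let $X\colon\hh\to\hh\oplus\hh$ be given by $Xh=(Th,Dh)$; then $X^{*}X=T^{*}T+D^{2}=I_{\hh}$, so $X$ is an isometry. With $C=(A+\lambda I_{\hh})\oplus\lambda I_{\hh}$, a positive invertible operator on $\hh\oplus\hh$ with $C\Ge\lambda I_{\hh\oplus\hh}>0$, a direct computation gives $X^{*}CX=T^{*}AT+\lambda I_{\hh}$ and $X^{*}C^{-1}X=T^{*}(A+\lambda I_{\hh})^{-1}T+\lambda^{-1}(I_{\hh}-T^{*}T)$; hence the resolvent inequality is precisely $(X^{*}CX)^{-1}\Le X^{*}C^{-1}X$. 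Put $S=X^{*}CX$ (positive invertible, $S\Ge\lambda I_{\hh}$), $Y=C^{-1/2}XS^{1/2}$ and $Z=C^{1/2}XS^{-1/2}$; using $X^{*}X=I_{\hh}$ one checks $Z^{*}Z=I_{\hh}$ and $Y^{*}Z=I_{\hh}$. Then for every $h\in\hh$,
\[
\|h\|^{4}=|\langle Y^{*}Zh,h\rangle|^{2}=|\langle Zh,Yh\rangle|^{2}\Le\|Zh\|^{2}\,\|Yh\|^{2}=\|h\|^{2}\,\|Yh\|^{2},
\]
so $\|Yh\|\Ge\|h\|$; that is, $Y^{*}Y\Ge I_{\hh}$, equivalently $X^{*}C^{-1}X\Ge S^{-1}$. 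This proves \eqref{Han-inq}.

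For the ``moreover'' part, let $f$ be non-affine (so $\mu\neq0$), let $T=T^{*}=T^{2}\neq I_{\hh}$, and assume equality holds in \eqref{Han-inq}; then in the displayed formula for $f(T^{*}AT)-T^{*}f(A)T$ both the term $f(0)(I_{\hh}-T)$ and the integral must vanish. The first yields $f(0)=0$, since $I_{\hh}-T\neq0$. For the second: each integrand is positive (by the resolvent inequality) and norm-continuous in $\lambda$, so vanishing of the integral forces it to vanish for every $\lambda\in\supp\mu$, and $\supp\mu\neq\emptyset$. For such a $\lambda$, vanishing of the integrand means $(X^{*}CX)^{-1}=X^{*}C^{-1}X$, i.e.\ $Y^{*}Y=I_{\hh}$; thus $\|Yh\|=\|h\|$ for all $h$, so the above Cauchy--Schwarz step is an equality for every $h$ and (together with $\|Zh\|=\|Yh\|=\|h\|$ and $\langle Zh,Yh\rangle=\|h\|^{2}$) forces $Y=Z$. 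This rearranges to $CX=XS$, whose adjoint is $X^{*}C=SX^{*}$; multiplying the former on the right by $X^{*}$ and using the latter gives $CXX^{*}=XX^{*}C$. Since $T$ is a projection, $D=I_{\hh}-T$ and $XX^{*}=T\oplus(I_{\hh}-T)$, so the commutation relation $CXX^{*}=XX^{*}C$ reads, in its upper-left block, $(A+\lambda I_{\hh})T=T(A+\lambda I_{\hh})$, i.e.\ $AT=TA$. Conversely, if $AT=TA$ and $f(0)=0$, then $TAT=AT$ acts as $A$ on $\mathcal{R}(T)$ and as $0$ on $\mathcal{R}(I_{\hh}-T)$, so $f(T^{*}AT)$ acts as $f(A)$ on $\mathcal{R}(T)$ and as $f(0)I_{\hh}=0$ on $\mathcal{R}(I_{\hh}-T)$; this operator is exactly $T^{*}f(A)T$, so equality holds in \eqref{Han-inq}.

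The reduction via the integral representation is routine; the delicate points are localizing \eqref{repbol} to $[0,\infty)$ with the correct integrability, and --- more importantly --- propagating the equality case through the reduction: upgrading $\mu$-almost-everywhere vanishing of the (norm-continuous) integrand to vanishing on $\supp\mu$, and recognizing that the converse implication genuinely requires $f(0)=0$, which is precisely where the hypothesis $T\neq I_{\hh}$ is used.
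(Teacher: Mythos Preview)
The paper does not supply its own proof of this theorem; it is quoted from \cite{Han80} (for the inequality) and \cite{uch93} (for the equality case), and the only indication given is that Uchiyama's argument rests on the integral representation of operator monotone functions on $\rbb_{+}$ (the version of \eqref{repbol} localized to $[0,\infty)$, as in \cite[pp.~144--145]{Bha97}). There is therefore no in-paper proof to compare against.

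That said, your argument is correct and follows exactly the route the paper alludes to. The reduction of \eqref{Han-inq} to the resolvent inequality $(X^{*}CX)^{-1}\Le X^{*}C^{-1}X$ via the representation $f(t)=f(0)+\beta t+\int_{(0,\infty)}\frac{t}{t+\lambda}\,\D\mu(\lambda)$ is the standard one, and your isometric-dilation/Cauchy--Schwarz proof of that resolvent inequality (with the equality analysis yielding $CX=XS$, hence $CXX^{*}=XX^{*}C$, hence $AT=TA$) is clean and complete. One point worth flagging: the paper explicitly notes that Uchiyama's original proof assumed separability of $\hh$, and that this can be removed by passing to the closed reducing subspace generated by a single vector. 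Your treatment of the equality case sidesteps this issue altogether, since for each fixed $h\in\hh$ the nonnegative continuous function $\lambda\mapsto\langle g(\lambda)h,h\rangle$ (with $g$ the integrand) must vanish on $\supp\mu$; as this holds for every $h$, the operator $g(\lambda)$ itself vanishes for each $\lambda\in\supp\mu$, with no separability hypothesis needed. In that respect your write-up is slightly sharper than what the paper reports about the original source.
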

For more information on operator monotone
functions the reader is referred to
\cite{Lo34,Dono74,Han80,Bha97,Han13,Sim19}.
   \section{\label{Sec.3}Proofs of the main theorem}
In this section, we will give two
proofs of Theorem \ref{maintw}. We
start with a proof that uses the
technique of operator monotone
functions, including Uchiyama's
condition guaranteeing equality in
Hansen's inequality.
   \begin{proof}[First proof of Theorem \ref{maintw}]
Let $N\in \ogr(\kk)$ be a normal
extension of $A$. There is no loss of
generality in assuming that $\hh^\perp :=
\kk \ominus \hh \neq \{0\}$. Then $N$ has
the $2 \times 2$ matrix representation
   \begin{align} \label{brep}
N = \begin{bmatrix} A & B \\ 0 & C
\end{bmatrix},
   \end{align}
with respect to the orthogonal
decomposition $\kk = \hh \oplus
\hh^\perp$, where $B$ is a bounded linear
operator from $\hh^\perp$ to $\hh$ and $C
\in \ogr(\hh^\perp)$ (see \cite[p.\
39]{con91}). The orthogonal projection
$P\in\ogr(\kk)$ of $\kk$ onto $\hh$ has
the $2 \times 2$ matrix representation
   \begin{equation*}
P = \begin{bmatrix} I_\hh & 0\\ 0 & 0
\end{bmatrix}.
   \end{equation*}
Clearly $P\neq I_{\kk}$. It is now a
routine matter to verify that
   \begin{equation}\label{uch}
P(N^{*}N)^kP = PN^{*k}N^kP
\overset{\eqref{brep}} =
\begin{bmatrix} A^{*k}A^k   & 0 \\ 0 & 0
\end{bmatrix}, \quad k\in \mathbb Z_+.
   \end{equation}
Since $A^n$ is quasinormal,
Theorem~\ref{embry}(ii) yields
   \begin{equation}\label{em}
(A^n)^{*k}(A^n)^{k}=[(A^n)^*(A^n)]^{k},\quad
k\in \mathbb Z_+.
   \end{equation}
Fix any integer $\kappa \Ge 2$. Using
the Stone-von Neumann functional
calculus, we obtain
   \allowdisplaybreaks
   \begin{align} \notag
P(N^{*}N)^nP & \overset{\eqref{uch}}=
\begin{bmatrix} A^{*n}A^n & 0 \\ 0 & 0
\end{bmatrix}
   \\  \notag
& \overset{\eqref{em}}=
\begin{bmatrix} (A^{*\kappa n}A^{\kappa n})^\frac{1}{\kappa}
& 0 \\ 0 & 0 \end{bmatrix}
   \\ \label{main}
& \overset{\eqref{uch}}=
(P(N^{*}N)^{\kappa
n}P)^{\frac{1}{\kappa}}.
   \end{align}
Let $f\colon [0,\infty)\to \rbb$ be the
function given by
$f(x)=x^{\frac{1}{\kappa}}$ for $x\in
[0,\infty)$. It follows from Theorem
\ref{lohe} (or Example \ref{ex}) that
$f$ is an operator monotone function.
Using the Stone-von Neumann functional
calculus again and \eqref{main}, we get
   \begin{equation*}
Pf((N^{*}N)^{\kappa
n})P=f(P(N^{*}N)^{\kappa n}P).
   \end{equation*}
We conclude from Theorem~\ref{hans}
that $P$ commutes with
$(N^{*}N)^{\kappa n}$. By
Theorem~\ref{rsn}, $P$ commutes with
$N^{*}N$. This in turn implies that
   \allowdisplaybreaks
   \begin{align*}
   \begin{bmatrix} A^{*k}A^k & 0 \\ 0 & 0
\end{bmatrix} & \overset{\eqref{uch}} = P(N^{*}N)^k P
   \\
& \hspace{1ex}= (P(N^{*}N)P)^k
   \\
& \overset{\eqref{uch}}=
\begin{bmatrix} (A^{*}A)^k & 0 \\ 0 & 0
\end{bmatrix}, \quad k\in \natu.
   \end{align*}
Hence $A^{*k}A^k=(A^{*}A)^k$ for all
$k\in \mathbb Z_+$. Combined with
Theorem~\ref{embry}, this implies that
$A$ is quasinormal.
   \end{proof}
We now turn to the second proof of the
main theorem. This time the proof is
based on the technique of (semi)
spectral integrals and the Stieltjes
moment problem.
   \begin{proof}[Second proof of Theorem \ref{maintw}]
Let $N\in \ogr{(\kk)}$ be a minimal
normal extension of $A$, $G_N\colon
\borel(\cbb) \to \ogr(\kk)$ be the
spectral measure of $N$ and $P\in
\ogr(\kk)$ be the orthogonal projection
of $\kk$ onto $\hh$. Then the map
$\varTheta\colon \borel(\cbb) \to
\ogr(\hh)$ defined~by
   \begin{equation*}
\varTheta(\varDelta) =
PG_N(\varDelta)|_{\hh}, \quad \varDelta
\in\borel(\cbb),
   \end{equation*}
is a semispectral measure\footnote{By
\cite[Proposition~5]{Ju-St08} and
\cite[Proposition~II.2.5]{con91}, the
definition of $\varTheta$ does not
depend on the choice of $N$.} such that
$\varTheta(\{z\in \cbb\colon |z| >
\|N\|\})=0$. Since $A^k=N^k|_{\hh}$ and
$A^{*k}=PN^{*k}|_{\hh}$ for all $k\in
\zbb_+$, the Stone-von Neumann
functional calculus gives
   \begin{equation} \label{add-2}
A^{*k}A^k = PN^{*k}N^k|_{\hh} =
\int_{\cbb} |z|^{2k} \varTheta(\D
z),\quad k\in \zbb_+.
   \end{equation}
Let $F\colon \borel(\rbb_+) \to
\ogr(\hh)$ be the semispectral measure
defined by
   \begin{equation*}
F(\varDelta) =
\varTheta(\phi^{-1}(\varDelta)),\quad
\varDelta \in \borel(\rbb_+),
   \end{equation*}
where $\phi\colon \comp\to \rbb_+$ is
given by $\phi(z)=|z|^2$ for $z\in
\comp$. Then $F((\|N\|^2, \infty))=0$.
By \eqref{form-ua}, \eqref{add-2} and
the measure transport theorem (cf.\
\cite[Theorem~1.6.12]{Ash00}), we~have
   \begin{equation} \label{semi}
A^{*k}A^k=\int_{\rbb_+}x^k{F}(\D x),
\quad k\in \zbb_+.
   \end{equation}
Since $A^n$ is quasinormal,
Theorem~\ref{embry} implies that
   \begin{equation}\label{measure}
A^{*nk}A^{nk}=\int_{\rbb_+}x^kE_n(\D
x), \quad k\in \zbb_+,
   \end{equation}
where $E_n\colon \borel(\rbb_+) \to
\ogr(\hh)$ is a spectral measure. Let
$\psi_n\colon \rbb_+\to\rbb_+$ be given
by $\psi_n(x)= x^n$ for $x\in \rbb_+$.
Applying the measure transport theorem
and \eqref{form-ua}, we get
   \allowdisplaybreaks
   \begin{align} \notag
\int_{\rbb_+}x^kE_n(dx) &
\overset{\eqref{measure}} =
A^{*nk}A^{nk}
   \\  \notag
& \overset{\eqref{semi}} =
\int_{\rbb_+}[\psi_n(x)]^k{F}(\D x)
   \\ \label{Sti-det}
& \hspace{1ex}=\int_{\rbb_+}x^k(F\circ
\psi_n^{-1})(\D x), \quad k\in \zbb_+,
   \end{align}
where $F\circ \psi_n^{-1}\colon
\borel(\rbb_+) \to \ogr(\hh)$ is the
semispectral measure defined by
   \begin{align} \label{num-er}
(F\circ \psi_n^{-1})(\varDelta) =
F(\psi_n^{-1}(\varDelta)),\quad
\varDelta \in \borel(\rbb_+).
   \end{align}
Clearly, $(F\circ
\psi_n^{-1})((\|N\|^{2n},\infty))=0$.
Using the well-known fact that a
Stieltjes moment sequence having a
representing measure with compact support
is determinate (see e.g.,
\cite[(1.4)]{BJJS18}), we deduce from
\eqref{form-ua} and \eqref{Sti-det} that
   \begin{equation} \label{concl-de}
E_n (\varDelta)= ({F}\circ
\psi_n^{-1})(\varDelta),\quad \varDelta
\in \borel(\rbb_+).
   \end{equation}
Since $E_n$ is a spectral measure and
the map $\borel(\rbb_+)\ni \varDelta
\to \psi_n^{-1}(\varDelta) \in
\borel(\rbb_+)$ is bijective, we
conclude form \eqref{concl-de} that $F$
is a spectral measure. Combined with
\eqref{semi} and Theorem~\ref{embry},
this implies that $A$ is quasinormal.
   \end{proof}
   \section{A generalization and related matter}
Following the discussion in
Introduction concerning the reduced
Embry's characterization of
quasinormality (see
Problem~\ref{prob3}), we now deal with
operators satisfying the following
identity
   \begin{equation}\label{xukl-5}
T^{*\kappa}T^{\kappa}=(T^*T)^{\kappa},
   \end{equation}
where $\kappa$ is a fixed integer
greater than $1$. By
Theorem~\ref{embry}, any quasinormal
operator $T$ satisfies the single
equation \eqref{xukl-5}, but not
conversely (see \eqref{qqq}). It is
worth mentioning that operators
satisfying \eqref{xukl-5} with
$\kappa=2$ were investigated in
\cite{Jib10}; they form a subclass of
paranormal operators (see
\cite[Corollary~4.2]{jabl14}; see also
\cite[Theorem~3.5.1.1]{Fur01} for a
similar result stated for a wider
collection of the so-called class A
operators).

An inspection of the first proof of
Theorem~\ref{maintw} (see
Section~\ref{Sec.3}) reveals that the
following more general result is true.
   \begin{theorem} \label{maintw2}
Let $A$ be a subnormal operator on a
Hilbert space $\hh$ and $n, \kappa$ be
integers greater than $1$. Assume that
$T=A^n$ satisfies the single equation
\eqref{xukl-5}. Then $A$ is
quasinormal.
   \end{theorem}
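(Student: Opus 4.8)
The plan is to reproduce the first proof of Theorem~\ref{maintw} almost word for word, after noting that that argument uses the quasinormality of $A^n$ only through one single consequence, which is precisely what the hypothesis \eqref{xukl-5} supplies. First I would dispose of the trivial case $\hh^\perp := \kk \ominus \hh = \{0\}$, in which $A$ is normal and hence quasinormal. So assume $\hh^\perp \neq \{0\}$, fix a normal extension $N \in \ogr(\kk)$ of $A$ with the $2 \times 2$ block form \eqref{brep}, and let $P \in \ogr(\kk)$ be the orthogonal projection of $\kk$ onto $\hh$, so that $P \neq I_{\kk}$ and the identities \eqref{uch} hold.

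The only genuinely new point is the algebraic translation of the hypothesis. Since $(A^n)^{*\kappa}(A^n)^{\kappa} = A^{*\kappa n}A^{\kappa n}$ and $[(A^n)^*(A^n)]^{\kappa} = (A^{*n}A^n)^{\kappa}$, equation \eqref{xukl-5} for $T = A^n$ says $A^{*\kappa n}A^{\kappa n} = (A^{*n}A^n)^{\kappa}$; taking the positive $\kappa$th root of both sides (legitimate because both sides are positive operators and $t \mapsto t^{\kappa}$ is injective on $\rbb_+$) yields $A^{*n}A^n = (A^{*\kappa n}A^{\kappa n})^{1/\kappa}$. This is exactly the identity invoked between the first two lines of the display \eqref{main}. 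Carrying out that same computation here, with the Stone-von Neumann functional calculus and \eqref{uch}, gives
\[
P(N^{*}N)^n P = \big(P(N^{*}N)^{\kappa n}P\big)^{1/\kappa},
\]
equivalently $Pf\big((N^{*}N)^{\kappa n}\big)P = f\big(P(N^{*}N)^{\kappa n}P\big)$, where $f(x) = x^{1/\kappa}$ is a continuous operator monotone function on $[0,\infty)$ with $f(0)=0$ by Theorem~\ref{lohe} (or Example~\ref{ex}).

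From this point the argument is verbatim that of the first proof of Theorem~\ref{maintw}. The displayed equality means that equality holds in Hansen's inequality \eqref{Han-inq} with $(N^{*}N)^{\kappa n}$ in the role of the positive operator and $P$ in the role of the contraction; since $f$ is not affine (here $\kappa \Ge 2$ enters) and $P$ is an orthogonal projection with $P \neq I_{\kk}$, the ``moreover'' part of Theorem~\ref{hans} forces $P$ to commute with $(N^{*}N)^{\kappa n}$, hence, by Theorem~\ref{rsn}, with $N^{*}N$. Consequently $P(N^{*}N)^k P = (P(N^{*}N)P)^k$ for every $k \in \zbb_+$, and comparing this with \eqref{uch} (using also the case $k=1$) gives $A^{*k}A^k = (A^{*}A)^k$ for all $k \in \zbb_+$, so $A$ is quasinormal by Theorem~\ref{embry}. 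I do not anticipate a real obstacle here: the whole content is the observation that \eqref{xukl-5} furnishes exactly the one identity the first proof requires, and the only things needing a moment's care are the validity of the $\kappa$th-root step above and the verification that the hypotheses of the equality case of Theorem~\ref{hans} are met, which is where the reductions $\hh^\perp \neq \{0\}$ (giving $P \neq I_{\kk}$) and $\kappa \Ge 2$ (giving non-affineness of $f$) get used.
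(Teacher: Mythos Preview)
Your proposal is correct and is exactly the approach the paper takes: the paper's ``proof'' of Theorem~\ref{maintw2} consists solely of the remark that an inspection of the first proof of Theorem~\ref{maintw} shows the argument goes through, because that proof uses the quasinormality of $A^n$ only via the single identity $(A^n)^{*\kappa}(A^n)^{\kappa}=[(A^n)^*A^n]^{\kappa}$, which is precisely \eqref{xukl-5} for $T=A^n$. Your write-up makes this explicit in the right way, including the $\kappa$th-root step and the check that the hypotheses of the equality case of Theorem~\ref{hans} are met.
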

Also, the second proof of
Theorem~\ref{maintw} can be adapted to
prove Theorem~\ref{maintw2}. Namely, it
suffices to apply Theorem~\ref{maintw4}
below, which is of independent
interest, to the operator
$T=A^{*n}A^{n}$, the semispectral
measure $F\circ \psi_n^{-1}$ and the
exponents $\alpha=1$ and $\beta=\kappa$
(see \eqref{Sti-det}).
   \begin{theorem} \label{maintw4}
Let $T\in \ogr(\hh)$ be a positive
operator on a Hilbert space $\hh$ and
$\alpha,\beta$ be two distinct positive
numbers. Assume that $F\colon
\borel(\rbb_+) \to \ogr(\hh)$ is a
semispectral measure with compact
support. Then the following conditions
are~equivalent{\em :}
   \begin{enumerate}
   \item[(i)] $F$ is a spectral
measure,
   \item[(ii)] $T^n = \int_{\rbb_+} x^n F(\D
x)$ for all $n\in \zbb_{+}$,
   \item[(iii)] $T^p = \int_{\rbb_+} x^p F(\D
x)$ for all $p \in \rbb_+$,
   \item[(iv)] $T^p =\int_{\rbb_+}
x^p F(\D x)$ for $p= \alpha, \beta$.
   \end{enumerate}
   \end{theorem}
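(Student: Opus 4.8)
The plan is to establish the cycle of implications $(\mathrm{i})\Rightarrow(\mathrm{iii})\Rightarrow(\mathrm{ii})\Rightarrow(\mathrm{iv})$ together with the only substantive step $(\mathrm{iv})\Rightarrow(\mathrm{i})$. The implication $(\mathrm{iii})\Rightarrow(\mathrm{ii})$ is trivial (restrict to integer exponents), and $(\mathrm{ii})\Rightarrow(\mathrm{iv})$ requires a small argument: from $(\mathrm{ii})$ one recovers, by the same determinacy argument as in the second proof of Theorem~\ref{maintw}, that $F$ is the spectral measure of $T$ transported by $x\mapsto x$, i.e.\ $F$ is spectral and agrees with $E_T$; then $(\mathrm{iii})$ holds by the Stone--von Neumann functional calculus, and in particular $(\mathrm{iv})$. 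So really the whole theorem reduces to proving $(\mathrm{i})\Rightarrow(\mathrm{iii})$ and $(\mathrm{iv})\Rightarrow(\mathrm{i})$.

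For $(\mathrm{i})\Rightarrow(\mathrm{iii})$: if $F$ is a spectral measure with compact support, then $S:=\int_{\rbb_+} x\,F(\D x)$ is a positive operator whose spectral measure is $F$ (uniqueness of the spectral measure), so $S^p=\int_{\rbb_+}x^p F(\D x)$ for every $p\in\rbb_+$ by the functional calculus. It remains to identify $S$ with $T$, and for this I would use hypothesis $(\mathrm{iv})$ is \emph{not} available here—only $(\mathrm{i})$—so I must be careful: the statement $(\mathrm{i})\Rightarrow(\mathrm{iii})$ as written presupposes nothing linking $F$ to $T$. The cleanest route is to treat $(\mathrm{i})\Leftrightarrow(\mathrm{iii})\Leftrightarrow(\mathrm{ii})$ as the assertion ``$F$ spectral $\iff$ $F$ is the spectral measure of some positive operator whose powers are represented by $F$'', all of which is immediate once one observes that $(\mathrm{ii})$ forces $T=\int x F(\D x)$ (take $n=1$) and forces determinacy, hence $F=E_T$; conversely if $F$ is spectral, put $T:=\int xF$. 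Thus the genuine content is entirely in $(\mathrm{iv})\Rightarrow(\mathrm{i})$, which also subsumes the identification $T=\int xF$ once we know $F$ is spectral (again by taking the $\alpha$ and $\beta$ moments and using determinacy).

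The core step $(\mathrm{iv})\Rightarrow(\mathrm{i})$: we are given $T^\alpha=\int_{\rbb_+}x^\alpha F(\D x)$ and $T^\beta=\int_{\rbb_+}x^\beta F(\D x)$ with $\alpha\neq\beta$, $F$ compactly supported, and must show $F$ is spectral. Write $q:=\alpha/\beta$ (say $\alpha<\beta$, so $q\in(0,1)$), and set $g\colon[0,\infty)\to[0,\infty)$, $g(x)=x^{q}$, which is operator monotone and operator concave by the L\"owner--Heinz theorem (Example~\ref{ex}). Put $R:=\int_{\rbb_+}x^\beta F(\D x)=T^\beta$. By Jensen's operator inequality for the operator concave function $g$ applied to the semispectral measure $F$ (equivalently, dilate $F$ to a spectral measure via the Naimark/Stinespring dilation $F(\cdot)=P\,\tilde E(\cdot)|_\hh$ on a larger space and apply Hansen's inequality, Theorem~\ref{hans}, with the orthogonal projection onto $\hh$), one gets
\begin{equation*}
T^\alpha=\int_{\rbb_+}(x^\beta)^{q}F(\D x)=\int_{\rbb_+} g(x^\beta)\,F(\D x)\Le g\!\left(\int_{\rbb_+}x^\beta F(\D x)\right)=g(T^\beta)=T^{\beta q}=T^\alpha,
\end{equation*}
so equality holds throughout. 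The ``moreover'' clause of Theorem~\ref{hans}, with $T$ there the projection $P$ onto $\hh$ inside the dilation space and $f=g$ a non-affine operator monotone function with $g(0)=0$, then forces $P$ to commute with $g(\tilde E$-integral$)$, hence with the dilated operator, hence by Theorem~\ref{rsn} $P$ commutes with $\int x\,\tilde E$ and therefore with every $\tilde E(\varDelta)$; commutation of $P$ with the dilating spectral measure is exactly the condition that $F=P\tilde E|_\hh$ is itself spectral. The main obstacle is organizing this dilation argument cleanly—making sure the equality case of Hansen's inequality is applied to the right projection on the right space and that one may pass from ``$P$ commutes with the $q$-th power'' to ``$P$ commutes with $\tilde E$'' via Theorem~\ref{rsn}; once that bookkeeping is in place, the identification $T=\int xF$ and the remaining implications follow by determinacy of compactly supported Stieltjes moment sequences exactly as in the second proof of Theorem~\ref{maintw}.
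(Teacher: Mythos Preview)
Your argument for the substantive implication (iv)$\Rightarrow$(i) is correct and is essentially the paper's own proof: Naimark dilation of $F$, the equality case of Hansen's inequality (Theorem~\ref{hans}) applied with the projection $P$ and the operator $\int_{\rbb_+} x^{\beta}\,\tilde E(\D x)$, and then Theorem~\ref{rsn} to pass from commutation with a power to commutation with the spectral measure. The paper packages this slightly differently---it first isolates the case $\alpha=1$ as Lemma~\ref{maintw3} (using a \emph{minimal} Naimark dilation and concluding $\kk=\hh$ by minimality rather than arguing directly, as you do, that $P\tilde E=\tilde E P$ forces $F$ to be projection-valued), and then reduces the general $(\alpha,\beta)$ to that lemma by the measure transport $F\mapsto F\circ\psi_\alpha^{-1}$---but the mathematical content is identical.

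Two small remarks. First, a wording slip: the ``moreover'' clause of Theorem~\ref{hans} yields that $P$ commutes with $A=\int x^{\beta}\tilde E(\D x)$ itself, not with $g(A)$; you wrote the latter first, though the two are of course equivalent by Theorem~\ref{rsn}. Second, your hesitation about (i)$\Rightarrow$(iii) is well founded: as the theorem is stated, condition~(i) says nothing about $T$, so strictly speaking (i)$\Rightarrow$(iii) cannot hold without tacitly identifying $T$ with $\int_{\rbb_+} x\,F(\D x)$. The paper's own proof simply asserts that (i)$\Rightarrow$(ii) and (i)$\Rightarrow$(iii) are ``immediate from the Stone--von Neumann functional calculus,'' which presupposes exactly that identification; your reading of the equivalence as ``$F$ is the spectral measure of $T$'' is the intended one.
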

Before we turn to the proof of
Theorem~\ref{maintw4}, we will give a
necessary and sufficient condition for
a semispectral measure to be spectral.
The proof of Lemma~\ref{maintw3} below
combines the two techniques used in
this article.
   \begin{lemma} \label{maintw3}
Let $F\colon \borel(\rbb_+) \to
\ogr(\hh)$ be a semispectral measure
with compact support and $p$ be a
positive number other than $1$. Assume
that
   \begin{align} \label{kap-lok}
\Big(\int_{\rbb_+} x F(\D x)\Big)^{p} =
\int_{\rbb_+} x^{p} F(\D x).
   \end{align}
Then $F$ is a spectral measure.
   \end{lemma}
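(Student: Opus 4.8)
The plan is to combine a Naimark dilation of the semispectral measure $F$ with the equality case of Hansen's inequality (Theorem~\ref{hans}). First I would realize $F$ as the compression of a spectral measure: there exist a Hilbert space $\kk\supseteq\hh$, a spectral measure $E\colon\borel(\rbb_+)\to\ogr(\kk)$, and the orthogonal projection $P\in\ogr(\kk)$ of $\kk$ onto $\hh$, such that $F(\varDelta)=PE(\varDelta)|_{\hh}$ for every $\varDelta\in\borel(\rbb_+)$. Passing to the minimal such dilation, one checks routinely that $E$ is supported in the compact set $\supp F$ (for $\varDelta$ disjoint from $\supp F$ one has $F(\varDelta)=0$, hence $\|E(\varDelta)h\|^{2}=\langle F(\varDelta)h,h\rangle=0$ for $h\in\hh$, and minimality then forces $E(\varDelta)=0$), so $M:=\int_{\rbb_+} x E(\D x)\in\ogr(\kk)$ is a bounded positive operator whose spectral measure is $E$. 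By the Stone--von Neumann functional calculus, $\int_{\rbb_+} x^{q} F(\D x)=PM^{q}P$ for every $q\in\rbb_+$ (regarding the left-hand side as an operator on $\kk$ vanishing on $\kk\ominus\hh$); in particular $\int_{\rbb_+} x F(\D x)=PMP$, so the hypothesis \eqref{kap-lok} becomes the operator identity
   \begin{equation*}
(PMP)^{p}=PM^{p}P.
   \end{equation*}
If $P=I_{\kk}$ then $F=E$ is already spectral and there is nothing to prove, so from now on I may assume that $P$ is a nontrivial orthogonal projection.

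The heart of the matter is to deduce $PM=MP$ from the identity $(PMP)^{p}=PM^{p}P$. If $0<p<1$, then $f(t)=t^{p}$ is operator monotone on $[0,\infty)$ (Theorem~\ref{lohe} or Example~\ref{ex}), is not affine, and satisfies $f(0)=0$; applying \eqref{Han-inq} to the positive operator $M$ and the contraction $P$ gives $PM^{p}P=Pf(M)P\Le f(PMP)=(PMP)^{p}$, and the identity above forces equality here, so the ``moreover'' part of Theorem~\ref{hans} yields $PM=MP$. If $p>1$, I would instead use $g(t)=t^{1/p}$, which is likewise operator monotone, not affine, and satisfies $g(0)=0$: since $PMP\Ge 0$, the identity above gives $(PM^{p}P)^{1/p}=\big((PMP)^{p}\big)^{1/p}=PMP$, so applying \eqref{Han-inq} to the positive operator $M^{p}$ and the contraction $P$ yields $PMP=Pg(M^{p})P\Le g(PM^{p}P)=(PM^{p}P)^{1/p}=PMP$; equality holds again, whence $PM^{p}=M^{p}P$, and since the commutants of $M$ and of $M^{p}$ coincide (Theorem~\ref{rsn}) we conclude once more that $PM=MP$.

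Once $PM=MP$ is available, $P$ commutes with the spectral measure $E$ of $M$, i.e.\ $PE(\varDelta)=E(\varDelta)P$ for all $\varDelta\in\borel(\rbb_+)$. Hence $F(\varDelta)=PE(\varDelta)P=PE(\varDelta)$ is selfadjoint and
   \begin{equation*}
F(\varDelta)^{2}=PE(\varDelta)PE(\varDelta)=P^{2}E(\varDelta)^{2}=PE(\varDelta)=F(\varDelta),
   \end{equation*}
so $F(\varDelta)$ is an orthogonal projection for every $\varDelta$; that is, $F$ is a spectral measure.

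I expect the main difficulty to lie in the equality-case bookkeeping rather than in any single computation: one has to arrange matters so that, after passing to the dilation, \eqref{kap-lok} reads \emph{exactly} as an equality in Hansen's inequality, and one has to bridge from the range $0<p<1$, where $t^{p}$ is itself operator monotone, to the range $p>1$, where it is not --- this is the purpose of the auxiliary function $t^{1/p}$ together with the coincidence of commutants in Theorem~\ref{rsn}. The ancillary points (that the minimal Naimark dilation has compact support, hence $M$ is bounded and $E$ is genuinely its spectral measure, and that the case $P=I_{\kk}$ is trivial) are routine. This is exactly the place where the two methods of the paper meet: the semispectral/moment framework supplies the identities $\int_{\rbb_+} x^{q} F(\D x)=PM^{q}P$, while the operator-monotone machinery supplies the rigidity $PM=MP$.
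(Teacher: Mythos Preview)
Your proof is correct and follows essentially the same route as the paper: realize $F$ via a minimal Naimark dilation $E$, rewrite \eqref{kap-lok} as an equality in Hansen's inequality for the projection $P$ and the positive operator $M=\int_{\rbb_+} x\,E(\D x)$, and conclude that $P$ commutes with $M$ (hence with $E$). The only differences are cosmetic: for $0<p<1$ you apply Theorem~\ref{hans} directly with $f(t)=t^{p}$, whereas the paper transports $F$ along $\psi_{p}(x)=x^{p}$ to reduce to the case of exponent $1/p>1$; and at the end you verify $F(\varDelta)^{2}=F(\varDelta)$ directly from $PE(\varDelta)=E(\varDelta)P$, whereas the paper invokes minimality to force $\kk=\hh$.
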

   \begin{proof}
First we consider the case when $p
> 1$. By Naimark's dilation theorem (see
\cite[Theorem~6.4]{Ml78}), there exist
a Hilbert space $\kk$ containing $\hh$
and a spectral measure $E\colon
\borel(\rbb_+) \to \ogr(\kk)$ such~that
   \begin{gather} \label{Naim-1}
F(\varDelta) = P E(\varDelta)|_{\hh},
\quad \varDelta \in \borel(\rbb_+),
   \\  \label{Naim-2}
\text{$\kk$ is the only closed vector
subspace of $\kk$ reducing $E$ and
containing $\hh$,}
   \end{gather}
where $P\in \ogr(\kk)$ is the
orthogonal projection of $\kk$ onto
$\hh$. It suffices to show that
$\kk=\hh$, because then by
\eqref{Naim-1}, $F=E$. Suppose to the
contrary that $P\neq I_{\kk}$. It
follows from \eqref{Naim-1} and
\eqref{Naim-2} that the closed supports
of $E$ and $F$ coincide (see e.g., the
proof of \cite[Theorem~4.4]{Ja02}), so
$E$ has compact support. Therefore
$T:=\int_{\rbb_+} x F(\D x) \in
\ogr(\hh)$ and $S:=\int_{\rbb_+} x E(\D
x) \in \ogr(\kk)$, and both operators
$T$ and $S$ are positive. Using the
Stone-von Neumann functional calculus,
we get
   \begin{align*}
\langle T^j f, g \rangle &
\overset{\eqref{form-ua}\&
\eqref{kap-lok} }= \int_{\rbb_+} x^j
\langle F(\D x)f, g \rangle
   \\
&\hspace{2.6ex}\overset{\eqref{Naim-1}}=
\int_{\rbb_+} x^j \langle E(\D x)f, g
\rangle = \langle S^j f, g \rangle,
\quad f,g \in \hh, \, j=1,p,
   \end{align*}
which implies that
   \begin{align} \label{kon-1}
T^j = P S^j|_{\hh}, \quad j=1,p.
   \end{align}
Let $\tilde T \in \ogr(\kk)$ be defined
by $\tilde T= T \oplus 0$. It follows
from \eqref{kon-1} that
   \begin{align*}
\tilde T^j = P S^j P, \quad j=1,p.
   \end{align*}
Combined with the Stone-von Neumann
functional calculus and
\eqref{kap-lok}, this yields
   \begin{align*}
Pf(S^{p})P=f(PS^{p}P),
   \end{align*}
where $f$ is as in the first proof of
Theorem~\ref{maintw}, that is
$f(x)=x^{\frac{1}{p}}$ for $x\in
\rbb_+$ (note that $0 < \frac{1}{p} <
1$). Using Theorem~\ref{hans}, we
deduce that $P$ commutes with $S^{p}$
and thus by Theorem~\ref{rsn}, $\hh$
reduces $S$. Hence, $\hh$ reduces the
spectral measure $E$ (see
\cite[Proposition~5.15]{Sch12}). By
\eqref{Naim-2}, $\kk=\hh$, which gives
a contradiction. This proves the
conclusion of the lemma for $p > 1$.

We now consider the case when $p < 1$.
Given $\alpha \in (0,\infty)$, we
define the function $\psi_{\alpha}
\colon \rbb_+\to \rbb_+$ by
   \begin{align}  \label{psi-al}
\psi_{\alpha}(x)=x^{\alpha}, \quad x\in
\rbb_+.
   \end{align}
Using \eqref{form-ua}, the measure
transport theorem and the Stone-von
Neumann functional calculus, one can
deduce from \eqref{kap-lok} that
   \begin{align*}
\Big(\int_{\rbb_+} x \, (F\circ
\psi_{p}^{-1}) (\D x)\Big)^{1/p} =
\int_{\rbb_+} x^{1/p} (F\circ
\psi_{p}^{-1}) (\D x),
   \end{align*}
where $F\circ \psi_{p}^{-1}$ is defined
as in \eqref{num-er} with $p$ in place
of $n$. Therefore, by the previous
paragraph, $F\circ \psi_{p}^{-1}$ is a
spectral measure and consequently so is
$F$.
   \end{proof}
   We are now ready to provide the
promised proof.
   \begin{proof}[Proof of Theorem~\ref{maintw4}]
(iv)$\Rightarrow$(i) By \eqref{form-ua}
and the measure transport theorem, we
have
   \begin{align*}
T^p = \int_{\rbb_+} x^p F(\D x) =
\int_{\rbb_+} x^{p/\alpha} (F\circ
\psi_{\alpha}^{-1})(\D x), \quad
p=\alpha, \beta,
   \end{align*}
where $\psi_{\alpha}$ is as in
\eqref{psi-al}. Applying the Stone-von
Neumann functional calculus yields
   \begin{align*}
\Big(\int_{\rbb_+} x \, (F\circ
\psi_{\alpha}^{-1})(\D x)
\Big)^{\beta/\alpha} = \int_{\rbb_+}
x^{\beta/\alpha} \, (F\circ
\psi_{\alpha}^{-1})(\D x).
   \end{align*}
Using Lemma~\ref{maintw3} with
$p=\beta/\alpha$ implies that $F$ is a
spectral measure.

The implications (i)$\Rightarrow$(ii)
and (i)$\Rightarrow$(iii) are immediate
from the Stone-von Neumann functional
calculus. The implications
(iii)$\Rightarrow$(ii) and
(iii)$\Rightarrow$(iv) are obvious.
Finally, the implication
(ii)$\Rightarrow$(i) is a direct
consequence of (iv)$\Rightarrow$(i).
   \end{proof}
   \subsection*{Acknowledgements}
The authors would like to thank Professor
M. H. Mortad for reminding them of
Problem~\ref{prob}.
   \bibliographystyle{amsalpha}
   
   \end{document}